\documentclass{amsart}
\usepackage{amsfonts}
\usepackage{amsmath,amssymb}
\usepackage{amsthm}
\usepackage{amscd}
\usepackage{graphics}
\usepackage{graphicx}
\theoremstyle{definition}{
	\newtheorem{Def}{{\rm Definition}}
	\newtheorem{Ex}{{\rm Example}}
	
	\newtheorem{Prob}{{\rm Problem}}
}
\theoremstyle{plain}
{
	
	\newtheorem{Prop}{Proposition}
	\newtheorem{Thm}{Theorem}
	\newtheorem{MainThm}{Main Theorem}

}

\begin{document}
	\title[Manifolds admitting certain very explicit special-generic-like maps]{Restrictions on Manifolds admitting certain explicit special-generic-like maps and construction of maps with the manifolds}
	\author{Naoki Kitazawa}
	\keywords{Special generic maps. Special-generic-like maps. Homology, cohomology and homotopy. Closed and simply-connected manifolds.  \\
		\indent {\it \textup{2020} Mathematics Subject Classification}: Primary~57R45. Secondary~57R19.}
	\address{Institute of Mathematics for Industry, Kyushu University, 744 Motooka, Nishi-ku Fukuoka 819-0395, Japan\\
		TEL (Office): +81-92-802-4402 \\
		FAX (Office): +81-92-802-4405 \\
	}
	\email{n-kitazawa@imi.kyushu-u.ac.jp}
	\urladdr{https://naokikitazawa.github.io/NaokiKitazawa.html}
	
	\begin{abstract}
		{\it Special-generic-like} maps or {\it SGL} maps are introduced by the author motivated by observing and investigating algebraic topological or differential topological properties of manifolds via nice smooth maps whose codimensions are negative. The present paper says that manifolds admitting certain very explicit SGL maps are topologically restricted strongly and this also constructs these maps with the manifolds explicitly.
		
		Morse functions with exactly two singular points on spheres or functions in Reeb's theorem and canonical projections of naturally embedded spheres in Euclidean spaces are generalized as special generic maps. Their nice global structures motivate us to study such maps and the manifolds. Manifolds represented as connected sums of the product of spheres and similar ones admit such maps in considerable cases. The topologies and the differentiable structures of the manifolds of the domains of such maps are also strongly restricted: thanks to Saeki and Sakuma, followed by Nishioka, Wrazidlo and the author. Respecting their nice global structures, these maps are generalized and this yields SGL maps.


		
	\end{abstract}
	
	
	\maketitle
	\section{Introduction.}
	\label{sec:1}
	\subsection{History and motivation.}
{\it Special-generic-like} maps or {\it SGL} maps are introduced by the author recently in \cite{kitazawa8, kitazawa9}. What is our motivation?

Morse functions with exactly two singular points, or functions in Reeb's theoorem and canonical projections of spheres naturally embedded in Euclidean spaces are generalized as special generic maps. Theory of Morse functions have been strong in geometry of manifolds. Their {\it singular} points, defined as points in the manifolds of the domains where the ranks of the differentials drop, have information on homology groups and some homotopy of the manifolds.
Reeb's theorem characterize spheres topologically (except in $4$-dimensional cases). We can say that {\it special generic} maps, and more generally, {\it fold} maps have been introduced and studied and that nice algebraic topological or differential topological theory has developed.

For example, manifolds represented as connected sums of the products of spheres admit such maps and manifolds being similar in some senses admit special generic maps under considerable situations.
On the other hand, the definition gives strong restrictions on maps and the manifolds and the topologies and the differentiable structures of them are strongly restricted. This is due to studies of Saeki and Sakuma mainly in the 1990s, followed by ones of Nishioka, Wrazidlo and the author after the 2010s.

Furthermore, special generic have very nice global structures. Respecting these structures, {\it special-generic-like} maps or {\it SGL} maps are introduced by the author. We are expecting that the class of such maps can cover wider classes of manifolds of the domains and that we can understand geometric properties of the maps and manifolds: especially, algebraic topological ones and differential topological ones.

\subsection{Terminologies, notions and notation.}

For elementary algebraic topology, see \cite{hatcher} for example. We adopt terminologies and notation in such books or ones seeming to be familiar to us.
We explain about some of the theory when we need. For example, fundamental classes of compact, connected and oriented manifolds and Poincar\'e duality.

Most of our topological spaces have the structures of some cell complexes whose dimension are finite. We can define the dimension of of such a space $X$ uniquely and let it be denoted by $\dim X$. Polyhedra are regarded as specific cases of cell complexes. A topological manifold is of the class of such spaces. A smooth manifold is regarded as a polyhedron, an object in the PL category, or the PL structure in a well-known canonical way. This defined the structure of a PL manifold as the object in the category uniquely. Remember that PL category and the piecewise smooth category are shown to be equivalent. Topological manifolds whose dimensions are at most $3$ have the unique structures of polyhedra. They are also the PL manifolds. They are also differentiable and smooth manifolds. For each of these topological manifolds, the differentiable structures are unique. Topological spaces homeomorphic to polyhedra whose dimensions are at most $2$ have unique PL structures.
This is due to \cite{moise} for example.
For a differentiable map $c:X \rightarrow Y$ between differentiable manifolds, a {\it singular point} $x \in X$ is defined as a point where the rank of the differential is smaller than the minimum between the dimensions $\dim X$ and $\dim Y$. The {\it singular set} of the map, denoted by $S(c)$, is the set of all singular points of $c$.

Let $k$ be a positive integer. ${\mathbb{R}}^k$ denotes the k-dimensional Euclidean space, endowed with the natural differentiable structure and the natural structure of a Riemannian manifold by the standard Euclidean metric.
$||x|| \geq 0$ means the metric between $x \in {\mathbb{R}}^k$ and the
 origin $0 \in {\mathbb{R}}^k$ under the metric. $\mathbb{R}:={\mathbb{R}}^1$ and let $\mathbb{Z} \subset \mathbb{Q} \subset \mathbb{R}$ denote the ring of all integers and that of all rational numbers. $S^k:=\{x \in {\mathbb{R}}^{k+1}\mid ||x||=1\}$ is the {\it $k$-dimensional unit sphere} for $k \geq 0$. This is a smooth compact submanifold with no boundary. It is connected for $k \geq 1$. It is a two-point set with the discrete topology for $k=0$. $D^k:=\{x \in {\mathbb{R}}^{k} \mid ||x|| \leq 1\}$ is the {\it k-dimensional unit disk} for $k \geq 1$. This is a smooth compact and connected submanifold. Their topologies are also easily understood. 

We define a {\it diffeomorphism} between smooth manifolds is a smooth map having no singular points and being a homeomorphism. A {\it diffeomorphism on a manifold} is defined as a diffeomorphism from the (smooth) manifold to itself. Two manifolds are defined to be {\it diffeomorphic} if and only if a diffeomorphism between these two exists. This naturally defines an equivalence relation on the family of all smooth manifolds. We add that their corners are eliminated in a canonical way and that this canonical way gives a smooth manifold with no corner. Furthermore, if we consider such eliminations to a fixed manifold, then we always have manifolds which are mutually diffeomorphic.
We can define mutually {\it PL homeomorphic manifolds} via piecewise smooth homeomorphisms similarly. PL homeomorphisms are for specific cases. However, we do not need deep understanding on PL maps and piecewise smooth maps, which are general maps in these categories, essentially. 

The {\it diffeomorphism group} of a manifold is defined as the space consisting of all diffeomorphisms on the manifold and endowed with the so-called {\it Whitney $C^{\infty}$ topology}. It is a (topological) group. 
The {\it Whitney $C^{\infty}$ topology} on the space of all smooth maps between given two smooth manifolds are a natural and important topology. The space and its subspaces are important in the theory of singularity theory of smooth maps and differential topology of manifolds. See \cite{golubitskyguillemin} for example.
\subsection{Short exposition on special generic maps and SGL maps.}
For theory of Morse functions, see \cite{milnor1, milnor2}.
A {\it special generic} map is defined as a smooth map which is locally projections or represented as the product maps of Morse functions obtained by natural heights of unit disks and the identity maps on suitable disks. 

The image of a special generic map is a smoothly immersed manifold whose dimension is same as that of the space of the target. In the interior of the manifold, it is represented as a projection such that the preimages are diffeomorphic to a unit sphere. Around the boundary, it is locally represented as the product maps of Morse functions obtained by natural heights of unit disks and the identity maps on suitable disks. 
A {\it special-generic-like} map or an {\it SGL} map and an FSGL map, a specific one, are defined by replacing the manifold of the preimage of the projection by a general smooth compact manifold and the functions for the local product maps around the boundary by general functions of some suitable classes. 

\subsection{Main Theorems and the content of our paper.}

\begin{MainThm}
\label{mthm:1}

	  We have an FSGL map $f:M \rightarrow {\mathbb{R}}^n$
	  on some $m$-dimensional so-called {\it spin} closed and simply-connected manifold $M$ enjoying the following properties for any integer satisfying $m \geq 5$ and any integer satisfying $l \geq 2$.
	  \begin{enumerate}
	  	\item The image is a smoothly embedded manifold represented as a boundary connected sum of two copies of $S^{n-1} \times D^1$ where the boundary connected sum is considered in the smooth category.
	   	\item The 2nd integral homology group of $M$ is free and of rank $l$ in the case $m=5$ and free and of rank $l-2$ in the case $m \geq 6$.
	   	\item The 3rd integral homology group of $M$ is free. It is of rank $l$ in the case $m=5$. It is of rank $4$ in the case $m=6$. It is of rank $2$ in the case $m \geq 7$.
	  	\item The $j$-th integral homology group is the trivial group for $j \neq 0,2,3,m-3,m-2,m$.
	  	\item In the case $m \neq 5$, the cup product for any ordered pair of an element of $H^{j_1}(M;\mathbb{Z})$ and one of $H^{j_2}(M;\mathbb{Z})$ is the zero element for $j_1,j_2 \in \{2,3\}$.
	  \end{enumerate}
  \end{MainThm}
\begin{MainThm}	 
	\label{mthm:2}
	In Main Theorem \ref{mthm:1}, as specific cases, if $(m,n)=(5,3), (6,4)$, then $M$ is a manifold represented as a connected sum of finitely many copies of $S^2 \times S^{m-2}$ or $S^3 \times S^3$ where we do not need to care about the category.
\end{MainThm}

This is a new result on new explicit observations on maps in \cite{kitazawa9, kitazawa10}. Main Theorem \ref{mthm:1} constructs maps explicitly on manifolds satisfying some topological conditions. Main Theorem \ref{mthm:2} shows strong restrictions on these manifolds. 

In the next section, we review special generic maps and SGL maps more rigorously and systematically. The third section is for Main Theorems. Main Theorem \ref{mthm:1} is presented again later in more rigorous and stronger manners. The fourth section is for new problems on our study. \\
\ \\
\noindent {\bf Conflict of interest.} \\
The author is supported by the project JSPS KAKENHI Grant Number JP22K18267 "Visualizing twists in data through monodromy" (Principal Investigator: Osamu Saeki) as a member. \\
\ \\
{\bf Data availability.} \\
Data essentially supporting our study are all here.

	\section{Preliminaries.}
	We review important terminologies, notions, notation and properties.



\subsection{Smooth bundles and linear bundles.}
		We define a {\it smooth} bundle as a bundle whose fiber is a smooth manifold and whose structure group is (some subgroup of) the diffeomorphism group.
	The class of {\it linear} bundles is important. This is a bundle whose fiber is a Euclidean space, a unit disk, or a unit sphere, and whose structure group consists of linear transformations: here linear transformations can be defined naturally and the groups give .
	
	Precise explanation on general theory of bundles is omitted and see \cite{steenrod} for example. For linear bundles, see \cite{milnorstasheff} for example. 
	
	\subsection{Fold maps and special generic maps.}
	\begin{Def}
		\label{def:1}
		A smooth map $c:X \rightarrow Y$ between two smooth manifolds with no boundaries is said to be a {\it fold} map if at each singular point $p \in X$ we have suitable local coordinates around $p$ (and $f(p)$) and some integer $0 \leq i(p) \leq \frac{m-n+1}{2}$ by them we can represent $c$ as $(x_1,\cdots,x_{\dim X}) \rightarrow (x_1,\cdots,x_{\dim Y-1},{\Sigma}_{j=1}^{\dim X-\dim Y-i(p)+1} {x_{\dim Y+j-1}}^2-{\Sigma}_{j=1}^{\dim X-\dim Y-i(p)+1} {x_{\dim Y+i(p)+j-1}}^2)$.
		If $i(p)$ is chosen as $0$ for each singular point $p$, then $f$ is said to be a {\it special generic} map. 
	\end{Def}
\begin{Ex}
	\label{ex:1}
	We define integers satisfying $k \geq 1$, $k_1,k_2 \geq 1$ and $k=k_1+k_2$. We can define a canonical projection of a unit sphere by a map mapping a point $(x_1,x_2) \in S^k \subset {\mathbb{R}}^{k+1}={\mathbb{R}}^{k_1} \times {\mathbb{R}}^{k_2}$ to $x_1 \in {\mathbb{R}}^{k_1}$. This is shown to be a special generic map and this is a kind of exercises on smooth manifolds and maps, Morse functions and the singularity theory of differentiable maps. 
\end{Ex}	 
	 Since the 1990s, manifolds admitting special generic maps have been actively studied by Saeki and Sakuma. Related results are in \cite{saeki1, saeki2, saekisakuma1, saekisakuma2}, Following these studies, Nishioka and Wrazidlo have obtained  \cite{nishioka, wrazidlo1, wrazidlo2}.
	They have revealed restrictions on the differentiable structures of the homotopy spheres, some elementary manifolds such as ones in Example \ref{ex:1}, which is presented later. Restrictions on the homology groups have been also studied. As a pioneer, the author launched explicit systematic studies on the cohomology rings of the manifolds in \cite{kitazawa1, kitazawa2, kitazawa3, kitazawa4, kitazawa5, kitazawa6, kitazawa7, kitazawa8, kitazawa11} for example.

	Other than canonical projections of unit spheres, we present simplest special generic maps.
	\begin{Ex}
		\label{ex:2}
		Let $l$ be an arbitrary positive integer and $m \geq n \geq 2$ integers. We give an integer $1 \leq n_j \leq n-1$ for each integer $1 \leq j \leq l$. We take a connected sum of $l>0$ manifolds considered in the smooth category where the $j$-th manifold is $S^{n_j} \times S^{m-n_j}$ and have a smooth manifold $M_0$. We have a special generic map $f_0:M_0 \rightarrow {\mathbb{R}}^n$ in such a way that the image is represented as a boundary connected sum of $l$ manifolds considered in the smooth category with the $j$-th manifold being diffeomorphic to $S^{n_j} \times D^{n-n_j}$ 
		
	\end{Ex}

\subsection{Special-generic-like maps or SGL maps.}

A {\it height function of a unit sphere} is a function defined as a map mapping $x=(x_1,x_2) \in S^k \subset {\mathbb{R}}^{k+1}=\mathbb{R} \times {\mathbb{R}}^k$ to $x_1$. We can say that this is generalized to canonical projections of unit spheres.
A {\it height function of a unit disk} is defined by restricting the height function of the unit sphere to the preimage of the natural half-space $\mathbb{R} \bigcap \{t \geq 0 \mid t \in \mathbb{R}\}$.

\begin{Def}
	\label{def:2}
	Let $m \geq n \geq 1$ be integers.
	A {\it special-generic-like} map or an SGL map is a smooth map $f:M \rightarrow N$ on an $m$-dimensional closed and connected manifold $M$ into an $n$-dimensional smooth manifold $N$ which has no boundary enjoying the following properties.
	\begin{enumerate}
		\item \label{def:2.1}
	The image is a smooth immersion ${\bar{f}}_N:\bar{N} \rightarrow N$ of an $n$-dimensional compact and connected manifold $\bar{N}$ into an $n$-dimensional smooth manifold $N$ which has no boundary.
	 
	 	\item \label{def:2.2}
We have a smooth surjection $q_f:M \rightarrow \bar{N}$ enjoying the relation $f={\bar{f}}_N \circ q_f$. Furthermore, we can define $q_f$ as a map whose restriction to the singular set defines a diffeomorphism onto the boundary $\partial \bar{N}$.
	 	\item \label{def:2.3}
 We have some small collar neighborhood $N(\partial \bar{N})$ for the boundary $\partial \bar{N} \subset \bar{N}$ and the composition of the restriction of $q_f$ to the preimage with the canonical projection to the boundary is the projection of some smooth bundle over $\partial \bar{N}$. We call such a bundle a {\it boundary smooth bundle of $f$}.
\item \label{def:2.4}
 On the collar neighborhood $N(\partial \bar{N})$ and the preimage ${q_f}^{-1}(N(\partial \bar{N}))$, it is locally represented as some smooth map represented as the product map of the following two smooth maps for suitable local coordinates around each point $p$ of $\partial \bar{N} \subset N(\partial \bar{N})$. 
\begin{enumerate}
\item \label{def:2.4.1}
 A smooth function ${\tilde{f}}_p:E_p \rightarrow \mathbb{R}$ on some ($m-n+1$)-dimensional smooth compact and connected manifold. 
\begin{enumerate}
\item 
\label{def:2.4.1.1}
 The image is represented as a closed interval $[a_p,b_p]$.
\item
\label{def:2.4.1.2}
 Either the preimage of $a_p$ or $b_p$ and the boundary $\partial E_p$ coincide. If $a_p$ can be chosen here, then $b_p$ is the maximum and the preimage of $t$ contains some singular points of the function if and only if $t=b_p$ and the preimage containing singular points is a polyhedron whose dimension is at most $m-n$. If $b_p$ can be chosen here, then $a_p$ is the minimum and the preimage of $t$ contains some singular points of the function if and only if $t=b_p$ and the preimage containing singular points is a polyhedron whose dimension is at most $m-n$.
\item \label{def:2.4.1.3}
  $E_p$ is diffeomorphic to a fixed manifold $E_{C}$ where $E_{C}$ depends on a connected component $C \ni p$ of the boundary $\partial \bar{N}$. 
\end{enumerate}
Furthermore, the manifold of the domain of this function is regarded as a fiber of some boundary smooth bundle just before. We call this function a {\it product component boundary function} or {\it PCB function} of $f$. 
\item
\label{def:2.4.2}
 The identity map on a small smoothly embedded copy of the unit disk $D^{\dim \partial \bar{N}} \subset \partial \bar{N}$ containing $p$ in the interior.
\end{enumerate}
 
	 	\item
\label{def:2.5}
 The restriction of $q_f$ to the preimage ${q_f}^{-1}(\bar{N}-{\rm Int}\ N(\partial \bar{N}))$ is the projection of some smooth bundle whose fiber is
diffeomorphic to the boundary $\partial E_p$ of the previous manifold. Furthermore, $\partial E_p$ is diffeomorphic to a fixed manifold $F_f$. We call this bundle an {\it internal smooth bundle of $f$}.
	 \end{enumerate}
 As a specific case, if the local functions ${\tilde{f}}_p:E_p \rightarrow \mathbb{R}$ or PCB functions of $f$ can be always chosen as smooth functions represented as the compositions of fold maps, then $f$ is said to be of {\it type F} or an {\it $FSGL$} map. 
	
\end{Def}


\begin{Prop}
Special generic maps are seen as FSGL maps. This had been shown first in \cite{saeki2} long before our notions were introduced. Furthermore, boundary smooth bundles of these maps are always linear according to the theory.
\end{Prop}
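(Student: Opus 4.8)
The plan is to start from the local normal form in Definition \ref{def:1} and then invoke the structure theorem for special generic maps established by Saeki in \cite{saeki2}, matching its conclusions one by one against the five conditions of Definition \ref{def:2}. Fix a special generic map $f \colon M \rightarrow N$. By Definition \ref{def:1} with $i(p)=0$ at every singular point, around each $p \in S(f)$ there are local coordinates in which $f$ is the product of the identity map on ${\mathbb{R}}^{n-1}$ and the positive-definite quadratic function $(y_1,\dots,y_{m-n+1}) \mapsto {\Sigma}_{j=1}^{m-n+1} {y_j}^2$ on a small disk $D^{m-n+1}$, while away from $S(f)$ the map is a submersion. This already exhibits the two model pieces we will need: an $S^{m-n}$-sphere fiber over interior points (the level sets of the quadratic form for positive values) and the disk $D^{m-n+1}$ carrying the quadratic function over a neighborhood of the fold locus.

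Next I would recall from \cite{saeki2} the Stein factorization $q_f \colon M \rightarrow \bar{N}$, where $\bar{N}$ is a compact connected $n$-dimensional manifold with boundary smoothly immersed into $N$ by a map ${\bar{f}}_N$ with $f = {\bar{f}}_N \circ q_f$, and where $q_f$ carries $S(f)$ diffeomorphically onto $\partial \bar{N}$; this is precisely conditions (\ref{def:2.1}) and (\ref{def:2.2}). The theory further gives that over ${\rm Int}\ \bar{N}$ the map $q_f$ is the projection of a smooth bundle with fiber $S^{m-n}$, so that taking $F_f = S^{m-n}$ yields condition (\ref{def:2.5}). For a small collar $N(\partial \bar{N})$, the normal form above shows that ${q_f}^{-1}(N(\partial \bar{N}))$ is the total space of a disk bundle over $\partial \bar{N}$ with fiber $E_p = D^{m-n+1}$, and its composition with the collar projection onto $\partial \bar{N}$ is the boundary smooth bundle of condition (\ref{def:2.3}). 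The remaining condition (\ref{def:2.4}) is read off from the same normal form: near each $p$ the map decomposes as the product of the identity on $D^{\dim \partial \bar{N}} \subset \partial \bar{N}$ with the restriction ${\tilde{f}}_p$ of the quadratic form to $E_p = D^{m-n+1}$, and one checks directly that ${\tilde{f}}_p$ has image a closed interval, has $\partial E_p = S^{m-n}$ equal to the preimage of one endpoint, and has as its unique singular fiber a single point over the other endpoint, so it is a PCB function.

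To obtain the stronger statement that $f$ is an FSGL map and not merely an SGL map, I would observe that the PCB function ${\tilde{f}}_p$ is a Morse function with exactly one nondegenerate critical point; since a Morse function is in particular a fold map, ${\tilde{f}}_p$ is (trivially) a composition of fold maps, so the PCB functions can always be chosen of type F. For the linearity assertion, I would quote from the theory that both the sphere bundle over ${\rm Int}\ \bar{N}$ and the disk bundle over $\partial \bar{N}$ are in fact linear, the latter being the statement that the boundary smooth bundles are linear.

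I expect the substantive content to be entirely contained in the cited structure theorem of \cite{saeki2}: the construction of the Stein factorization and, especially, the passage from the pointwise quadratic normal form to honest smooth—indeed linear—sphere and disk bundles over ${\rm Int}\ \bar{N}$ and $\partial \bar{N}$. Given that input, the remaining argument is bookkeeping: matching the normal forms against the clauses of Definition \ref{def:2} and noting that a single-critical-point Morse function is a fold map. The one point requiring genuine care is the linearity of the boundary smooth bundle, which does not follow formally from the mere existence of the bundle but is read from the fact that the collar near the fold locus is modeled on the linear quadratic form; this is exactly the assertion we are quoting from the theory.
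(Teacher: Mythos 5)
Your proposal is correct in substance and takes the same route as the paper, which offers no argument beyond the citation: the entire content of the Proposition is Saeki's structure theorem for special generic maps (\cite{saeki1, saeki2}), and what you add is the correct bookkeeping matching the Stein factorization $f={\bar{f}}_N \circ q_f$, the interior $S^{m-n}$-bundle, the $D^{m-n+1}$-bundle over $\partial \bar{N}$, and the height-function normal form against conditions (\ref{def:2.1})--(\ref{def:2.5}) of Definition \ref{def:2}.

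One side claim, however, is wrong and should be deleted: you quote ``from the theory'' that the sphere bundle over ${\rm Int}\ \bar{N}$ is linear as well as the disk bundle over $\partial \bar{N}$. Saeki's theorem gives linearity only for the boundary disk bundle, read off from the quadratic normal form at fold points; over the interior one gets merely a smooth $S^{m-n}$-bundle, whose structure group is the diffeomorphism group of $S^{m-n}$, and this need not reduce to $O(m-n+1)$ when $m-n$ is large. The paper itself treats linearity of internal bundles as a special low-dimensional phenomenon: in Theorem \ref{thm:1} (\ref{thm:1.2}) it is asserted only for $m=4,5,6$ and $n=3$, where the relevant diffeomorphism groups of $S^1$, $S^2$, $S^3$ do deformation retract to orthogonal groups. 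Since the Proposition claims linearity only for boundary smooth bundles, your overclaim is unused and the argument stands once it is removed. A smaller caveat: Definition \ref{def:1} defines fold maps between manifolds without boundary, whereas the PCB function lives on $E_p=D^{m-n+1}$, which has boundary; the paper is equally loose here, but a careful version of your last step should say that the unique singular point of the height function is a fold point, i.e., has the normal form of Definition \ref{def:1} with $i(p)=0$, which is what makes the function a composition of fold maps in the intended sense.
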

\begin{Ex}
	\label{ex:3}
	Special generic maps are also simply generalized special generic maps, first introduced in \cite{kitazawa8}. Simply generalized special generic maps are also FSGL map. In Example \ref{ex:2}, the maps are constructed as ones such that the boundary smooth bundles and the interior smooth bundles of the maps are trivial.
	
	A simply generalized special generic map is defined as an FSGL map such that the PCB function is always represented as the composition of the projection of some trivial smooth bundle whose fiber is diffeomorphic to the product of finitely many smooth manifolds each of which is diffeomorphic to some unit sphere over a unit disk with a height function where suitable local coordinates are considered.
	In short, a smooth map of the class is locally represented as the projection or the product map of a so-called {\it Morse-Bott} function of a good class and the identity map on a disk. This Morse-function also respects local Morse-Bott functions for so-called {\it moment} maps on so-called {\it symplectic toric} manifolds of dimension $2k$ into ${\mathbb{R}}^k$.
	
\end{Ex}
For related theory on symplectic toric manifolds and moment maps, see \cite{buchstaberpanov, delzant} for example. \cite{kitazawa9, kitazawa10} say that our SGLs are introduced to cover classes of manifolds of the domains seeming to be difficult to obtain by special generic maps. Symplectic toric manifolds admit no special generic maps in considerable cases. For example so-called $k$-dimensional projective space ${\mathbb{C}P}^k$, which is also a 2k-dimensional closed and simply-connected smooth manifold, is an important example, according to \cite{kitazawa5}.

\section{On Main Theorems.}
\subsection{Known results on classifications of maifolds admitting special generic maps}
  We define {\it homotopy sphere} as a smooth manifold homeomorphic to a unit sphere whose dimension is not $0$. 
A {\it standard} sphere is a homotopy sphere being diffeomorphic to some unit sphere. An {\it exotic sphere} is one which is not. It is well-known that $4$-dimensional exotic spheres are still unknown. Except these cases, homotopy spheres are known to be PL homeomorphic to unit spheres where the canonically defined PL manifolds are considered for the smooth manifolds. Under this rule, $4$-dimensional exotic spheres are known to be not PL homeomorphic to standard spheres.

By applying these terminologies on homotopy spheres, we present existing results on classifications of special generic maps closely related to our arguments.

\begin{Thm}[\cite{kitazawa5, nishioka,saeki1,saeki2} etc.]
	\label{thm:1}
	\begin{enumerate}
		\item
		\label{thm:1.1} \cite{burletderham, furuyaporto, saeki1, saeki2}
		Let $m$ be an arbitrary integer greater than or equal to $2$.
		A necessary and sufficient condition for an $m$-dimensional closed and simply-connected manifold $M$ to admit a special generic map $f:M \rightarrow {\mathbb{R}}^2$ is that $M$ is a homotopy sphere which is not a $4$-dimensional exotic sphere. 
			A necessary and sufficient condition for an $m$-dimensional closed and connected manifold $M$ to admit a special generic map $f:M \rightarrow {\mathbb{R}}^2$ is that $M$ is represented as a connected sum of finitely many manifolds in the following.
			\begin{enumerate}
				\item A homotopy sphere which is not a $4$-dimensional exotic sphere.
				\item The total space of a smooth bundle over the circle $S^1$ whose fiber is a homotopy sphere which is not a $4$-dimensional exotic sphere.
				\end{enumerate} 
		\item
		\label{thm:1.2} \cite{saeki1, saeki2}
		Let $m$ be an arbitrary integer greater than or equal to $4$.
	If an $m$-dimensional closed and simply-connected manifold $M$ admits a special generic map $f:M \rightarrow {\mathbb{R}}^3$, then $M$ is as follows.
		\begin{enumerate}
			\item \label{thm:1.2.1} A homotopy sphere which is not a $4$-dimensional exotic sphere.
			\item \label{thm:1.2.2} A manifold
			represented as a connected sum of smooth manifolds taken in the smooth category where each manifold here is the total space of a smooth bundle over $S^2$ whose fiber is either of the following two.
			\begin{enumerate}
				\item An {\rm (}$m-2${\rm )}-dimensional homotopy sphere where $m \neq 6$.
				\item A $4$-dimensional standard sphere.
			\end{enumerate}
		\end{enumerate}
		In the case $m=4,5,6$, this gives a necessary and sufficient condition. In the case, a fiber of each bundle can be taken as an {\rm (}$m-2${\rm)}-dimensional standard sphere and the total spaces of the bundles can be regarded as the total spaces of linear bundles.
	    Furthermore, in the cases {\rm (}\ref{thm:1.1}{\rm )} and {\rm (}\ref{thm:1.2}{\rm )},
		the manifold $M$ not being a homotopy sphere admits a special generic map as in Example \ref{ex:2} where the boundary smooth bundles or the internal smooth bundles may not be trivial. More explicitly, let $n_j=2$ in Example \ref{ex:1}.
		\item {\rm [}\cite{nishioka}etc.{\rm]}
		\label{thm:1.3} 
		Let $m$ be an arbitrary integer greater than or equal to $5$.
		If an $m$-dimensional closed and simply-connected manifold $M$ admits a special generic map $f:M \rightarrow {\mathbb{R}}^4$, then the 2nd integral homology group of $M$ is free.
			\item \cite{nishioka}
		\label{thm:1.4} In the case $m=5$ in {\rm (}\ref{thm:1.3}{\rm )}, this gives a necessary and sufficient condition. We remember the classification of $5$-dimensional closed and simply-connected manifolds in the smooth category, the PL, or equivalently, the piecewise smooth category, and the topology category, which are same, in \cite{barden}. We have a list of manifolds admitting special generic maps and a property similar to one in {\rm (}\ref{thm:1.3}{\rm )}.
\item \cite{kitazawa5}
\label{thm:1.5}
In the case $m=6$ in {\rm (}\ref{thm:1.3}{\rm )}, we have a similar list of manifolds admitting special generic maps and a property similar to one in {\rm (}\ref{thm:1.3}{\rm )} where the manifold in {\rm (}\ref{thm:1.2.2}{\rm )} is replaced by one represented as a connected sum of finitely many manifolds each of which is diffeomorphic to $S^2 \times S^4$ or $S^3 \times S^3$ in the list. The connected sums are essentially same in the topology category, piecewise smooth category, the PL category, and the topology category. For classifications of $6$-dimensional closed and simply-connected manifolds, see \cite{jupp, wall1, wall2, zhubr1, zhubr2} for example.
	\end{enumerate}
\end{Thm}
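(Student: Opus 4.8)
The plan is to route everything through the Stein factorization that is already built into the definition of an SGL map. For a special generic $f\colon M^{m}\to{\mathbb R}^{n}$, which is an FSGL map by the Proposition, I would use the quotient $q_{f}\colon M\to\bar N$ of Definition \ref{def:2}, where $\bar N$ is an immersed compact $n$-manifold with boundary and the internal fibre is the sphere $F_{f}=S^{m-n}$. The structural fact I would take as the engine of the proof, in the spirit of Saeki's work, is that $M$ is diffeomorphic to the boundary $\partial E$ of the total space $E$ of a \emph{linear} $D^{m-n+1}$-bundle over $\bar N$: over ${\rm Int}\,\bar N$ the map $q_{f}$ is the associated $S^{m-n}$-bundle, while the collar of $\partial\bar N$ supplies the disc-fibre degeneration. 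This reduces the whole problem to understanding (i) which compact $n$-manifolds $\bar N$ immerse in ${\mathbb R}^{n}$, and (ii) how the linear bundle and the boundary operation $E\mapsto\partial E$ encode the topology and smooth structure of $M$.

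First I would pin down $\bar N$ from the hypotheses on $M$. Because the internal fibre $S^{m-n}$ is connected, $q_{f}$ induces an isomorphism $\pi_{1}(M)\cong\pi_{1}(\bar N)$, so in the simply-connected case $\bar N$ is simply connected. For $n=2$ this forces $\bar N\cong D^{2}$; the bundle over a disc is trivial, $E\cong D^{2}\times D^{m-1}$, and $M\cong\partial E\cong S^{m}$ topologically, i.e.\ a homotopy sphere, the smooth structure being recorded by the gluing. Dropping simple connectivity, $\bar N$ ranges over the compact surfaces with boundary that immerse in ${\mathbb R}^{2}$, and decomposing such a surface as a disc with $1$-handles produces the connected-sum description of \ref{thm:1.1} in terms of homotopy spheres and total spaces of homotopy-sphere bundles over $S^{1}$. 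The exclusion of a $4$-dimensional exotic sphere I would obtain from the fact that such an $M$ bounds $E$ together with Reeb-type and $h$-cobordism arguments, dimension $4$ being exactly where these soft arguments fail to force standardness.

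For $n=3$ (part \ref{thm:1.2}) the scheme persists: a simply-connected compact orientable $3$-manifold has only $2$-sphere boundary components (half-lives-half-dies applied to $H_{1}(\partial\bar N;{\mathbb Q})\to H_{1}(\bar N;{\mathbb Q})=0$), so $\bar N$ is a punctured homotopy $3$-ball; feeding its handle decomposition and the linear $D^{m-2}$-bundle into $E\mapsto\partial E$ expresses $M$ as a connected sum of total spaces of bundles over $S^{2}$ with fibre $S^{m-2}$, and I would read the fibre ($(m-2)$-sphere, or $S^{4}$ when $m=6$) off the bundle data. For $n=4$ (parts \ref{thm:1.3}--\ref{thm:1.5}) the key point is that a codimension-$0$ immersion of $\bar N$ in ${\mathbb R}^{4}$ trivialises its tangent bundle, so $\bar N$ is parallelizable and, being simply connected, has free $H_{2}(\bar N;{\mathbb Z})$; the Gysin sequence of the $S^{m-4}$-bundle $q_{f}$ then transfers this freeness to $H_{2}(M;{\mathbb Z})$, which is \ref{thm:1.3}. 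The necessary-and-sufficient statements in \ref{thm:1.4} and \ref{thm:1.5} I would finish by matching the resulting list of candidate $M$ against Barden's classification of $5$-manifolds and the Wall--Jupp--Zhubr classification of $6$-manifolds, realizing each listed manifold by an explicit construction as in Example \ref{ex:2} with the linear boundary and internal bundles allowed to be nontrivial.

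The hardest step will be the dimension-$4$ homological control in \ref{thm:1.3} combined with the converse constructions: I must show not merely that $H_{\ast}(\bar N)$ is free but that the Gysin and boundary computations introduce no torsion into $H_{2}(M;{\mathbb Z})$, while simultaneously realizing every manifold on the classification lists by choosing the linear bundles over $S^{2}$ compatibly with a prescribed homotopy type \emph{and} smooth structure. The genuine difficulty is the smooth-structure bookkeeping---excluding $4$-dimensional exotic spheres and distinguishing standard from exotic fibres in the various ranges of $m$---which is handled by $h$-cobordism and surgery arguments rather than by the soft Stein-factorization formalism, and this is where I expect the real work to lie.
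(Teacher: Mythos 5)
You should first be aware that the paper itself does not prove Theorem \ref{thm:1}: it is stated as a survey of results cited from \cite{burletderham, furuyaporto, saeki1, saeki2, nishioka, kitazawa5}, so the only question is whether your sketch would reconstruct those proofs. It would not, because your structural ``engine'' is false and in fact contradicts the statement being proved. You claim that $M$ is diffeomorphic to $\partial E$ for a \emph{linear} $D^{m-n+1}$-bundle $E$ over $\bar{N}$. If that were so, then in the simply-connected case with $n=2$ you get $\bar{N}\cong D^2$, every linear bundle over $D^2$ is trivial, so $E\cong D^2\times D^{m-1}\cong D^{m+1}$ and $M\cong \partial E$ would always be the \emph{standard} $S^m$. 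But part (\ref{thm:1.1}) asserts that every homotopy $m$-sphere other than a $4$-dimensional exotic one admits a special generic map into ${\mathbb{R}}^2$, and for $m\geq 5$ (say $m=7$) exotic spheres exist and do admit such maps. Saeki's actual structure theorem --- which is exactly what Definition \ref{def:2} of this paper axiomatizes --- is weaker: $M$ is the union of a linear $D^{m-n+1}$-bundle over $\partial\bar{N}$ and a smooth $S^{m-n}$-bundle over the complement of a collar, glued along their common boundary by a diffeomorphism. That gluing diffeomorphism is not recorded by any bundle over $\bar{N}$, and it is precisely where the smooth exotica live: simply-connected domains of special generic maps into ${\mathbb{R}}^2$ are twisted doubles $D^m\cup_{\phi}D^m$, which is why exactly the $4$-dimensional exotic spheres are excluded. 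The same mechanism produces the fibers in part (\ref{thm:1.2}): the fiber of each bundle over $S^2$ is a union of two disk fibers $D^{m-2}$ glued along their boundary, hence an arbitrary homotopy $(m-2)$-sphere in general but necessarily the standard $S^4$ when $m=6$ (every orientation-preserving diffeomorphism of $S^3$ extends over $D^4$, by Cerf). Your framework, in which fibers are always standard sphere bundles of linear bundles, cannot see this distinction at all; you noticed the tension yourself when you hedged that $M\cong\partial E$ only ``topologically,'' but that hedge abandons the engine the rest of your argument relies on.

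The same oversimplification breaks your $n=4$ argument. The Gysin sequence is not available for $q_f$, since $q_f$ is a sphere bundle only over $\bar{N}-{\rm Int}\ N(\partial\bar{N})$ and something entirely different over the collar; one must run Mayer--Vietoris over the two-piece decomposition, and in the critical case $m=5$ the internal fiber is $S^1$, so even over the interior the homology of $M$ does not simply copy that of $\bar{N}$ --- controlling it there is the actual content of \cite{nishioka}. Moreover, your claimed lemma that a compact simply-connected $4$-manifold immersing with codimension zero in ${\mathbb{R}}^4$ has free $H_2$ is asserted, not proved: for a simply-connected compact $4$-manifold with boundary, torsion in $H_2$ is not formally excluded (it can a priori enter through the image of $H_2(\partial\bar{N};\mathbb{Z})$), and this freeness is a genuine step, not a remark. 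Finally, a small but symptomatic point: connectedness of the fiber only gives surjectivity of ${q_f}_{\ast}:\pi_1(M)\rightarrow\pi_1(\bar{N})$; injectivity uses that the fibers over interior points bound the disk fibers over the boundary part. In short, the missing idea throughout is the gluing bookkeeping of the two-piece decomposition, which is exactly the data the cited proofs (and Definition \ref{def:2}) are built around.
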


\subsection{Fundamental classes of connected and compact (oriented) manifolds.}
We explain about fundamental classes of connected and compact (oriented) manifolds. For our exposition here, see \cite{hatcher} again for example.

Let $A$ be a commutative ring having the unique identity element different from the zero element and let $1_A$ denote the identity element. $A$ can be considered to be the canonically obtained module over $A$ and in this case, $1_A$ and $-1_A$ are generators of $A$.
For a compact and connected oriented manifold $X$, $H_{\dim X}(X, \partial X; A)$ is isomorphic to $A$ as a module over $A$. We can give a generator respecting the orientation. Related to this, we do not need orientations in the case $A:=\mathbb{Z}/2\mathbb{Z}$ or the commutative ring of order $2$.

For a manifold $Y$, consider an embedding $i_X:X \rightarrow Y$ with suitable conditions being assumed respecting the category. For example, in the smooth category, this is assumed to be smooth and in the PL or the piecewise smooth category, this is assumed to be piecewise smooth. Furthermore, $X$ is embedded {\it properly}. This means that the boundary $\partial X$ is embedded in the boundary and the interior is embedded in the interior ${\rm Int}\ X$. In the smooth category, $X$ is assumed to be embedded in a so-called generic way in general.  
If $a \in H_j(Y, \partial Y; A)$ is realized as the value of the homomorphism ${i_{X}}_{\ast}:H_{\dim X}(X, \partial X; A) \rightarrow H_{\dim X}(Y, \partial Y; A)$ induced canonically from the embedding at the fundamental class of $X$, which is an element of $H_{\dim X}(X, \partial X; A)$, then $a$ is {\it represented} by the submanifold $i_X(X)$.

We remark on a "generic way" in smooth embeddings. In short they satisfy conditions on so-called "transversality". 
This is important in several scenes of our paper.
For our smooth embedding $i_X:X \rightarrow Y$, we consider an embedding satisfying the condition that 
the dimension of the intersection of the image of the differential $d {i_X}_p$ of the embedding $i_X:X \rightarrow Y$ at each point $p \in \partial X$ and the tangent space at the value $i_X(p) \in \partial Y$ is $\dim X+\dim \partial Y-\dim Y=\dim X-1$. This comes from fundamental and important notions in singularity theory of differentiable maps and differential topology. For these notions and arguments, see \cite{golubitskyguillemin} again.

Poincar\'e duality (theorem) or the so-called intersection theory for a compact, connected and oriented manifold is important. We can define {\it Poincar\'e duals} to elements of $H_j(X;A)$ and $H^{j}(X;A)$ as elements of $H^{\dim X-j}(X,\partial X;A)$ and $H_{\dim X-j}(X,\partial X;A)$ uniquely respectively.    We can also define {\it Poincar\'e duals} to elements of $H_j(X,\partial X;A)$ and $H^{j}(X, \partial X;A)$ as elements of $H^{\dim X-j}(X; A)$ and $H_{\dim X-j}(X;A)$ uniquely respectively.

\subsection{Proofs of Main Theorems and related observations and examples.}
Hereafter we encounter some of topological theory of $3$-dimensional manifolds such as a {\it Heegaard splitting} of a $3$-dimensional closed and connected manifold. This is a decomposition of such a manifold into two copies of some manifold represented as a boundary connected sum of finitely many copies of $S^1 \times D^2$ along surfaces of the boundaries or a so-called {\it Heegaard surface}.

See \cite{hempel} for example. However, we do not need to understand such theory well.

For the {\it $j$-th Stiefel-Whitney classes} of linear bundles including tangent bundles, we omit precise exposition. They are defined uniquely as the elements of cohomology groups of degrees $j$ where coefficient ring is $\mathbb{Z}/2\mathbb{Z}$, the commutative ring of order $2$. For the {\it $j$-th Pontrjagin classes} of linear bundles including tangent bundles, we also omit precise exposition. They are defined uniquely as the elements of integral cohomology groups of degrees $4j$.
See \cite{milnorstasheff}. The {\it $j$-th Stiefel-Whitney classes} and the {\it $j$-th Pontrjagin classes} of smooth manifolds are defined as those of the tangent bundles. {\it Spin} manifolds are smooth manifolds whose 2nd Stiefel-Whitney classes are the zero elements.
\begin{Thm}[Main Theorem \ref{mthm:1}]
	\label{thm:2}
	For any integer $m \geq 5$, any integer $l \geq 2$ and any integer $i$ satisfying $i \geq 2$ or $i=0$, we have an FSGL map $f:M \rightarrow {\mathbb{R}}^n$ on some $m$-dimensional so-called {\it spin} closed and simply-connected manifold $M$ enjoying the following properties.
	\begin{enumerate}
		\item The image is a smoothly embedded manifold represented as a boundary connected sum of two copies of $S^{n-1} \times D^1$ where the boundary connected sum is considered in the smooth category.
		\item The 2nd integral homology group of $M$ is free and of rank $l$ in the case $m=5$ and free and of rank $l-2$ in the case $m \geq 6$.
		\item The 3rd integral homology group of $M$ is free. It is of rank $l$ in the case $m=5$. It is of rank $4$ in the case $m=6$. It is of rank $2$ in the case $m \geq 7$.
		\item The $j$-th integral homology group is the trivial group for $j \neq 0,2,3,m-3,m-2,m$.
		\item In the case $m>5$, the cup product for any ordered pair of an element of $H^{j_1}(M;\mathbb{Z})$ and an element of $H^{j_2}(M;\mathbb{Z})$ is the zero element for $j_1,j_2 \in \{2,3\}$.
		\item The $a$-th Stiefel-Whitney class and the $a$-th Pontrjagin class of $M$ are always the zero elements for any positive integer $a$.
	\end{enumerate}
Furthermore, in the case $l>2$, for the resulting maps $f:=f_{i_1}:M:=M_{i_1} \rightarrow {\mathbb{R}}^n$ and $f:=f_{i_2}:M:=M_{i_2} \rightarrow {\mathbb{R}}^n$, we cannot choose any pair $(\Phi:M_{i_1} \rightarrow M_{i_2},\phi:{\mathbb{R}}^n \rightarrow {\mathbb{R}}^n)$ of diffeomorphisms enjoying the relation $\phi \circ f_{i_1}=f_{i_2} \circ \Phi$ for any pair $(i_1,i_2)$ of distinct integers $i=i_1$ and $i=i_2$.
\end{Thm}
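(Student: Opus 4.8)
The plan is to prove Theorem~\ref{thm:2} in three stages: an explicit construction of the map, a computation of the invariants of $M$, and a rigidity argument for the final non-equivalence assertion.

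First I would build the image and the map. Fix $\bar N$ to be the boundary connected sum of two copies of $S^{n-1}\times D^1$ together with a smooth embedding $\bar f_N:\bar N\to\mathbb{R}^n$ realising~(1); this is an $n$-dimensional handlebody-type object with $\bar N\simeq S^{n-1}\vee S^{n-1}$ and boundary $\partial\bar N$. Over $\bar N-\mathrm{Int}\,N(\partial\bar N)$ I would mount an internal smooth bundle with a prescribed closed connected fibre $F_f=F_f^{(i)}$, and over the collar $N(\partial\bar N)$ a boundary smooth bundle whose fibre $E_p$ is a compact manifold with $\partial E_p=F_f$, glued so that the resulting $M$ is closed. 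To make $f$ of type F I would present every PCB function $\tilde f_p:E_p\to\mathbb{R}$ as a composition of fold maps with the collar behaviour required by Definition~\ref{def:2}; the basic models here generalise the special generic maps of Example~\ref{ex:2} and Theorem~\ref{thm:1}. The integer $l$ controls the homology carried by the pair $(F_f,E_p)$, while $i$ (taken with $i\neq 1$, to keep everything simply connected) indexes a discrete family of fibres/fillings designed to leave the global invariants of $M$ unchanged.

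Second I would compute the invariants. Writing $M=q_f^{-1}(\bar N-\mathrm{Int}\,N(\partial\bar N))\cup q_f^{-1}(N(\partial\bar N))$ and running Mayer--Vietoris along the $F_f$-bundle over $\partial\bar N$, the homology of $M$ splits into a \emph{base} part governed by $H_*(\bar N)$ and $H_*(\bar N,\partial\bar N)$ shifted by $\dim F_f$ --- exactly as for special generic maps, and this supplies the rank-$2$ groups in degrees $3$ and $m-3$ (which coincide into a rank-$4$ group when $m=6$) --- together with an \emph{enrichment} part coming from $(F_f,E_p)$, which supplies the free rank-$(l-2)$ (resp. rank-$l$) groups in degree $2$ and, by Poincar\'e duality, in degree $m-2$. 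This yields~(2),~(3),~(4) and the freeness at once. Simple-connectivity follows from van Kampen applied to the same decomposition, using $i\neq 1$. The spin condition and the vanishing in~(6) of all Stiefel--Whitney and Pontrjagin classes follow by showing $M$ is stably parallelisable, which is forced by the special-generic-like bundle structure. For~(5) I would read the ring structure off the handle/product decomposition: the degree-$2$ and degree-$3$ generators have Poincar\'e-dual cycles lying in separate pieces of the decomposition, so their products land in a trivial group or vanish by disjointness.

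Third, for the non-equivalence, suppose a pair $(\Phi:M_{i_1}\to M_{i_2},\phi:\mathbb{R}^n\to\mathbb{R}^n)$ of diffeomorphisms satisfied $\phi\circ f_{i_1}=f_{i_2}\circ\Phi$. Since the image is embedded we have $q_{f_i}=\bar f_N^{-1}\circ f_i$, so $\phi$ carries the embedded image to itself (interior to interior, boundary to boundary) and $\Phi$ covers the diffeomorphism $\bar\Phi=\bar f_N^{-1}\circ\phi\circ\bar f_N$ of $\bar N$. Hence $\Phi$ restricts to an isomorphism of the internal smooth bundles of $f_{i_1}$ and $f_{i_2}$ over $\bar\Phi$; in particular it carries a regular fibre $F_f^{(i_1)}$, together with the bundle datum recording $i_1$, to those recording $i_2$. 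As these map-level invariants are pairwise distinct across the family indexed by $i$ --- a family that is nontrivial precisely when $l>2$ --- we must have $i_1=i_2$. The hard part will be the homological bookkeeping of the second stage: arranging $(F_f^{(i)},E_p)$ so that the enrichment realises \emph{exactly} the prescribed free ranks and kills all homology in the forbidden degrees, while keeping $M$ simply connected, spin, and with global invariants independent of $i$, so that only the map --- through its internal bundle and regular fibre --- remembers $i$. Verifying the type-F condition and extracting the precise cup-product relations of~(5) from the handle structure are the remaining technical points.
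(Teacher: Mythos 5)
Your proposal reproduces the outer skeleton of the paper's argument (construct the map from an internal bundle and boundary bundles over $\bar{N}$, compute invariants from the decomposition, then argue rigidity), but it is missing the one idea the whole proof runs on. In the paper the target dimension is pinned down as $n=m-2$, so the fiber $F_f$ is a closed orientable \emph{surface}, and the decisive mechanism is this: the preimage under $q_f$ of a properly embedded arc in $\bar{N}$ joining two boundary components is a closed orientable $3$-manifold presented by a \emph{Heegaard splitting} --- the two ends contribute handlebodies (coming from the PCB functions, which the paper builds in STEP 1 as explicit fold-map compositions on ${S^2}_{(g+1)} \times D^1$, settling the type-F condition you defer), glued along the surface fiber. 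Since every closed connected orientable $3$-manifold has a Heegaard splitting, the arc preimages can be prescribed arbitrarily: the paper takes $P_1=S^3$ (this is what forces $M$ to be simply-connected, via the homotopy exact sequence of the internal bundle and Hurewicz) and $P_2$ a connected sum of $l-2$ copies of $S^2 \times S^1$ (case $i=0$) or of Lens spaces $P_{{\rm L},i}$ with first homology $\mathbb{Z}/i\mathbb{Z}$ (case $i \geq 2$). This is where $l$ and $i$ actually enter. Your ``enrichment part carried by $(F_f,E_p)$'' names no such mechanism, and everything you postpone as ``bookkeeping'' is exactly where the paper works: it perturbs $f$ to a fold map $f^{\prime}$, composes with a projection to get a Morse function whose singular points it counts and cancels to pin the homology down, represents the Poincar\'e duals by explicit submanifolds ($F_{P_1}$, $F_{P_2}$, sections of $(n-1)$-spheres, copies of $S^{n-1}\times S^1$), gets the vanishing cup products by making these representatives disjoint, and gets the spin/Stiefel--Whitney/Pontrjagin vanishing from triviality of the tangent bundle restricted to them.

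Your third stage fails as stated, for a reason internal to the construction: for distinct $i$ the internal smooth bundles are \emph{isomorphic} --- all are trivial bundles with the same fiber, a closed orientable surface of fixed genus --- so there is no ``bundle datum recording $i$'' for $\Phi$ to carry around, and no regular fiber distinguishes $f_{i_1}$ from $f_{i_2}$. The index $i$ is recorded only in how the boundary bundles are glued to the internal bundle, and it is detected by the diffeomorphism type of the arc preimages: an equivalence $(\Phi,\phi)$ carries ${q_{f_{i_1}}}^{-1}(C)$ to ${q_{f_{i_2}}}^{-1}(\bar{\Phi}(C))$, and for an arc joining the two relevant boundary components these preimages have first homology $(\mathbb{Z}/i_1\mathbb{Z})^{l-2}$ and $(\mathbb{Z}/i_2\mathbb{Z})^{l-2}$ (free abelian of rank $l-2$ when $i=0$), which are non-isomorphic for $i_1 \neq i_2$ once $l>2$. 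Note also that you impose an extra demand the theorem never makes --- that the manifolds $M_i$ themselves have ``global invariants independent of $i$,'' i.e.\ that only the map remembers $i$; the statement only requires non-equivalence of the maps, and the paper proves exactly that.
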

\begin{proof}
	General construction of an FSGL map on a closed and simply-connected manifold such that "$m$ in Definition \ref{def:2}" is greater than or equal to $3$ and that $F_f$ is a closed, connected and orientable surface of genus $g \geq 1$ there is essentially same as one in the proof of Theorem 3 (Main Theorm 2) of \cite{kitazawa10} and our proof of main result of \cite{kitazawa0}.

Hereafter, as assumed, we consider the case where the dimension $m$ of the manifold of the domain is at least $5$.\\
	\ \\
	STEP 1 Construction of PCB functions. A review of our proof of Theorem 3 of \cite{kitazawa10}. \\
	\ \\
	We construct our PCB function ${\tilde{f}}_p:E_p \rightarrow \mathbb{R}$, denoted by ${\tilde{f}}_{S^2,g,p}$ for the genus $g \geq 1$. This is represented as the composition of some local smooth map ${\tilde{f}}_{S^2,g}:{S^2}_{(g+1)} \times D^1 \rightarrow D^1 \times D^1=[-1,1] \times [-1,1] \subset {\mathbb{R}}^2$ with a height function on the unit disk where the height function is naturally scaled. More precisely, the function is scaled to a function on the disk which is centered at the origin and whose radius is $\frac{1}{2}$. We can construct the manifold ${S^2}_{(g+1)}$ and the map ${\tilde{f}}_{S^2,g}:{S^2}_{(g+1)} \times D^1 \rightarrow D^1 \times D^1=[-1,1] \times [-1,1] \subset {\mathbb{R}}^2$ as follows.
	\begin{itemize}
		\item ${S^2}_{(g+1)}$ is a compact and connected surface obtained by removing the interiors of $g+1$ smoothly and disjointly embedded copies of the disk $D^2$ in a copy of $S^2$.
		\item If we restrict the map to the interior of the manifold, then it is a fold map into the interior of the product $D^1 \times D^1 \subset {\mathbb{R}}^2$.
		\item If we restrict the map to ${S^2}_{(g+1)} \times \{t\}$ for each $t \in D^1$, then the image is $[-1,1] \times \{t\}$ and by composing the projection to $[-1,1]$, we have a Morse function enjoying the following properties.
		\begin{itemize}
			\item The image is [-1,1].
			\item The preimage of $\{-1\}$ and the exactly $g$ boundary connected components coincide.
			\item The preimage of $\{1\}$ and the exactly one boundary connected component coincide.
		\item If we restrict the map to ${S^2}_{(g+1)} \times \{t\}$ for each $t \in D^1-\{0\}$, then it is a Morse function with exactly $g$ singular points such that at distinct singular points the values are mutually distinct and that the singular points are all in the interior of the surface of the domain.
		\item If we restrict the map to ${S^2}_{(g+1)} \times \{0\}$, then it is a Morse function with exactly $g$ singular points and the values there are always $0$. Furthermore, the singular points are all in the interior of the surface of the domain.
		\end{itemize}
	\end{itemize}
	We explain about the topology of ${S^2}_{(g+1)} \times D^1$. Let $g=1$. In this case, this is, after the corner is eliminated, diffeomorphic to $D^2 \times S^1$. Assume that for $g:=g_0 \geq 1$, this is diffeomorphic to a manifold represented as a boundary connected sum of $g_0$ copies of $D^2 \times S^1$ (after the corner is eliminated). Here remember that we do not care about the category since the dimensions we consider are (at most) $3$. However, as in the original expocition, we can do similarly in higher dimensional cases. We investigate this for $g:=g_0+1$. The manifold we consider is obtained by removing the interior of a small regular neighborhood of a closed interval embedded properly in the manifold in the case $g=g_0$. In other words, the boundary of the closed interval is embedded in the boundary and the interior is embedded in the interior. 
	Furthermore, the embedding is taken as an embedding smoothly homotopic to a constant map whose values are always one point in the boundary $\partial {S^{2}}_{(g_0+1)} \times [-1,1]$ of the manifold ${S^{2}}_{(g_0+1)} \times [-1,1]$ for the case $g=g_0$. We can also choose the smooth homotopy $e:D^1 \times [0,1] \rightarrow {S^{2}}_{(g_0+1)} \times [0,1]$ enjoying the following properties.
	\begin{itemize}
    \item The restriction to $D^1 \times [0,1)$ is a smooth embedding. 
    \item The restrictions to each $D^1 \times \{t^{\prime}\}$ for $t^{\prime} \in [0,1)$ are also smooth embeddings. 
    \end{itemize}
    The properties says that the embedding of the closed interval is "unknotted" (in the smooth category).
	We can understand that our manifold is diffeomorphic to a manifold represented as a boundary connected sum of $g:=g_0+1$ copies of $D^2 \times S^1$.
	
	We construct our PCB function ${\tilde{f}}_p:E_p \rightarrow \mathbb{R}$, denoted by ${\tilde{f}}_{S^2,0,p}$ for the genus $g=0$ as a height function on a copy of the $2$-dimensional unit disk $D^2$. \\
	\ \\	
	STEP 2 General construction of a natural simplest FSGL map. \\
	\ \\
	We construct a desired map by using these PCB functions. 
	For a manifold ${\bar{N}}^{\prime}$ represented as a boundary connected of at least two copies of $S^{n-1} \times D^1$ taken in the smooth category
	we have an FSGL map $f:M \rightarrow N:={\mathbb{R}}^n$ on some closed and connected manifold $M$ such that ${{\bar{N}}}^{\prime}=\bar{N}$ in Definition \ref{def:2}. We can easily embed smoothly and naturally ${{\bar{N}}}^{\prime} \subset N$. Hereafter, we abuse notation in Definition \ref{def:2} in our construction.
	
	We can construct a simplest example by considering trivial bundles as an internal smooth bundle and a boundary smooth bundle and gluing them by some isomorphism between the trivial bundles defined canonically on the boundaries.
	
	More precisely, we glue by the product map of the identity map on the base space and some diffeomorphism on the fiber. In the most specific and natural case, we glue them by the product map of the identity map on the base space and the identity map on the fiber. Note that here we can consider natural identifications between the base spaces of the bundles and the fibers and we consider these identifications.
	   
	 This is a fundamental and natural method and related remarks are also in \cite{kitazawa10} for example. \cite{kitazawa10} also reviews shortly that \cite{kitazawa11} studies the structures and special properties of special generic maps and the manifolds obtained in such ways as most simplest examples. \\
	\ \\
	STEP 3 Construction of an FSGL map into ${\mathbb{R}}^n$ like one in STEP 2 on some closed and simply-connected manifold $M$ of dimension $m=n+2$ whose 2nd integral homology group is free. A review of our proof of Main Theorem 5 of \cite{kitazawa10}. \\
	\ \\
	We show the construction of a desired map as presented in STEP 2 on a simply-connected one whose 2nd integral homology group is free. 
	Let the number of the copies of $S^{n-1} \times D^1$ we need to have ${{\bar{N}}}^{\prime}$ denoted by $l_N \geq 1$.
	Let ${\{P_j\}}_{j=1}^{l_N}$ be any sequence of $3$-dimensional closed, connected and orientable manifolds.
	We can define a family ${\{S_j\}}_{j=0}^{l_N}$ of $l_N+1$ connected components of the boundary $\partial {{\bar{N}}}^{\prime} \subset {{\bar{N}}}^{\prime}$ so that ${\bar{f}}_{N}(S_{0})$ is the only one connected component which is the boundary of the unique non-compact connected component of the closure of the complement of the image ${\bar{f}}_N({{\bar{N}}}^{\prime})$.
	We can have an FSGL map $f:M \rightarrow N$ like one in STEP 2 and choose a family $\{C_j\}$ of $l_N$ mutually disjoint closed interval smoothly and properly embedded in ${{\bar{N}}}^{\prime}=\bar{N}$ enjoying the following properties.
	\begin{itemize}
		\item The boundary of $C_j$ consists of exactly two points and one is embedded in $S_0$ and the other is in $S_j$. 
		\item The interior of $C_j$ is embedded in the interior of ${{\bar{N}}}^{\prime}$.
		\item $C_j$ is embedded satisfying the condition on the transversality.
		\item The preimage ${q_f}^{-1}(C_j)$ is diffeomorphic to $P_j$. This is due to the existence of a Heegaard splitting of any $3$-dimensional closed, connected and orientable manifold. 
	\end{itemize}
Here, we follow our proof of Main Theorem 5 of \cite{kitazawa10}. 
The singular set is, by the construction, of dimension $n-1+1=n$. We also have $n+1<m=n+2$. A smoothly embedded circle in $M$ is, smoothly isotoped to a circle in ${q_f}^{-1}(\bar{N}-{\rm Int}\ N(\partial \bar{N}))$. 
The total space of the internal smooth bundle 
${q_f}^{-1}(\bar{N}-{\rm Int}\ N(\partial \bar{N}))$ is the product of the base space and its fiber, diffeomorphic to a closed, connected and orientble surface $F_f$. We can see that
the fundamental group of ${q_f}^{-1}(\bar{N}-{\rm Int}\ N(\partial \bar{N}))$ is isomorphic to the fundamental group of the surface of the fiber or the preimage. Remember also that $\bar{N}$ collapses to $\bar{N}-{\rm Int}\ N(\partial \bar{N})$ and that the base space $\bar{N}-{\rm Int}\ N(\partial \bar{N})$ is simply-connected. 

Hereafter, we consider the case where some $P_j$ is a $3$-dimensional sphere. We can see that the smoothly embedded circle considered here is shown to be null-homotopic. We can see that $M$ is simply-connected.

Remember that the singular set is of dimension $n$ and we also have $n+2=m$. Assume that the 2nd integral homology group of $M$ is not free. Then there exists an element which is not the zero element and which is of a finite order. It is represented by a $2$-dimensional sphere ${S^2}_{\rm T}$ smoothly embedded in $M$. FOr this remember also that $M$ is shown to be simply-connected just before. Note that "T" here is for the "torsion". We can see that ${S^2}_{\rm T}$ can be chosen as a sphere smoothly embedded in  ${q_f}^{-1}(\bar{N}-{\rm Int}\ N(\partial \bar{N}))$.
It is well-known that the 2nd homotopy group of a closed, connected and orientable surface of genus $g \geq 1$ is the trivial group. We can also see that the 2nd homotopy group of ${q_f}^{-1}(\bar{N}-{\rm Int}\ N(\partial \bar{N}))$ is free and of rank $l_N$ by the homotopy exact sequence as before. More precisely, an isomorphism between the 2nd homotopy groups of ${q_f}^{-1}(\bar{N}-{\rm Int}\ N(\partial \bar{N}))$ and the base space $\bar{N}-{\rm Int}\ N(\partial \bar{N})$ is induced by the projection. Hurewicz theorem shows that the embedding of ${S^2}_{\rm T}$ must be null-homotopic. This is a contradiction. \\
 \ \\
	STEP 4 Construction of a desired map and the $m$-dimensional manifold. Our main new work. \\
	\ \\
	STEP 4-1 The structure of the desired map. \\
	\ \\
	In STEP 3, we construct a map by setting $l_N=2$ first. 
	Let $i$ be any integer greater than $1$ or $i=0$ as in the assumption.
	In addition, we set $P_1:=S^3$. We also set $P_2:=S^3$ in the case $l=2$. In the case $l>2$, we set $P_2$ as a manifold represented as a connected sum of $l^{\prime}:=l-l_N=l-2$ copies of $S^2 \times S^1$ in the case $i=0$ and one represented as a connected sum of $l^{\prime}:=l-l_N=l-2$ copies of a so-called {\it Lens space} $P_{{\rm L},i}$ enjoying the following in the case $i>1$. 
	\begin{itemize}
		\item The 1st integral homology group $H_1(P_{{\rm L},i};\mathbb{Z})$ is isomorphic to $\mathbb{Z}/i\mathbb{Z}$, the cyclic group of order $i>1$.
		\item Let an integer $i>1$ be given as before. We consider the case where the coefficient ring is $\mathbb{Z}/i\mathbb{Z}$. The 1st homology group $H_1(P_{{\rm L},i};\mathbb{Z}/i\mathbb{Z})$ is isomorphic to $\mathbb{Z}/i\mathbb{Z}$.
		\end{itemize}
	More explicitly, we can construct this as the total space of some linear bundle over $S^2$ whose fiber is diffeomorphic to $S^1$.
	
	We glue the internal smooth bundle and the boundary smooth bundle by a suitable isomorphism of the trivial bundles on the boundaries, which is represented as the product map of the identity map on the base space and some diffeomorphism on the fiber as in STEP 2.

	\ \\	
	STEP 4-2 Investigating the manifold $M$ of the domain. First we obtain a fold map and a Morse function on $M$. \\
	\ \\
	By the structure of the obtained map, we can smoothly deform the map to obtain a fold map $f^{\prime}:M \rightarrow {\mathbb{R}}^n$ enjoying the following properties. We do not explain about fundamental properties of general fold maps. However, it is important that fold maps are locally projections or represented as the products of Morse functions and the identity maps on some disks for suitable local coordinates.
	
	This also respects fundamental properties of Morse functions and fundamental arguments on some singularity theory and differential topology. We omit explaining about them. Here we can regard $x:=(x_1,\cdots,x_n) \in {\mathbb{R}}^n$ as a vector or an element in the canonically defined $n$-dimensional real vector space ${\mathbb{R}}^n$ and we consider in such ways in considerable scenes. We also apply well-known notation on vectors and vector spaces. $||x||$ is also regarded as the value of the so-called Euclidean norm at $x$ where the space is also naturally regarded as a vector space with the norm.  
	\begin{itemize}
		\item The image $f^{\prime}(M)$ is the complementary set of the interior of the disjoint union of the following two sets of the disk represented by $\{4Lx \in {\mathbb{R}}^n \mid x \in D^{n}\}
		$ for some sufficiently large integer $L>0$. 
		\begin{itemize}
			\item $\{x \in {\mathbb{R}}^n \mid ||x-(-2L,0,\cdots,0)|| \leq 2L\}$. $(-2L,0,\cdots,0)$ is also a point in $\mathbb{R} \times \{0\} \subset \mathbb{R} \times {\mathbb{R}}^{n-1}={\mathbb{R}}^n$. 
			\item $\{x \in {\mathbb{R}}^n \mid ||x-(2L,0,\cdots,0)|| \leq L\}$. $(2L,0,\cdots,0)$ is also a point in $\mathbb{R} \times \{0\} \subset \mathbb{R} \times {\mathbb{R}}^{n-1}={\mathbb{R}}^n$. 
		\end{itemize}
			\item The restriction to the singular set $S(f^{\prime})$ is a smooth embedding and the image $f^{\prime}(S(f^{\prime}))$ is represented as the disjoint union of the following four sets. They are represented as the disjoint unions of smoothly embedded standard spheres of dimension $n-1$.  
			
			\begin{itemize}
				\item $S_{f^{\prime},1}:=\{x \in {\mathbb{R}}^n \mid 2L+1 \leq ||x-(-2L,0,\cdots,0)|| \leq 2L+l^{\prime}, ||x-(-2L,0,\cdots,0)|| \in \mathbb{Z}\}$.
				\item $S_{f^{\prime},2}:=\{x \in {\mathbb{R}}^n \mid L+1 \leq ||x-(2L,0,\cdots,0)|| \leq L+l^{\prime}, ||x-(2L,0,\cdots,0)|| \in \mathbb{Z}\}$.
				\item $S_{f^{\prime},3}:=\{x \in {\mathbb{R}}^n \mid 4L-l^{\prime} \leq ||x|| \leq 4L-1, ||x|| \in \mathbb{Z}\}$.
				\item The boundary $\partial f^{\prime}(M) \subset f^{\prime}(M)$.
			\end{itemize}
		\item The preimage ${f^{\prime}}^{-1}(\{x \in {\mathbb{R}}^n \mid x-(-2L,0,\cdots,0)=(0,\cdots,0,t), t \geq 2L\})$ is diffeomorphic to $P_1$. Let $F_{P_1}$ denote this.
		\item The preimage ${f^{\prime}}^{-1}(\{x \in {\mathbb{R}}^n \mid x-(2L,0,\cdots,0)=(0,\cdots,0,t), t \geq L\})$ is diffeomorphic to $P_2$. Let $F_{P_2}$ denote this.
	\end{itemize}

\begin{figure}
	
	\includegraphics[height=45mm, width=40mm]{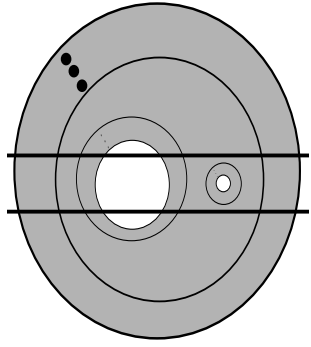}

	\caption{The images $f^{\prime}(M)$ and $f^{\prime}(S(f^{\prime}))$ and the straight lines $\{(x,-\frac{3}{2}L) \mid x \in {\mathbb{R}}^{n-1}\}$ and  $\{(x,\frac{3}{2}L) \mid x \in {\mathbb{R}}^{n-1}\}$.}
	\label{fig:1}
	
\end{figure}

We compose the fold map $f^{\prime}$ with the canonical projection mapping $x=(x_{\rm h},x_{\rm v}) \in {\mathbb{R}}^{n-1} \times \mathbb{R}={\mathbb{R}}^{n}$ to $x_{\rm v} \in \mathbb{R}$. We have a Morse function on $M$. The number of the singular point of the function is calculated as the twice the number of the connected components of the singular set $S(f^{\prime})$. In short, points regarded as poles of each sphere in the singular set $S(f^{\prime})$ contains some singular points as preimages. It is calculated as $2(3+3l^{\prime})$.

 We investigate the singular set of the Morse function.

For the function, we consider the preimages of $\{t \in \mathbb{R} \mid t \leq -\frac{3}{2}L\}$ and $\{t \in \mathbb{R} \mid t \geq \frac{3}{2}L\}$. 

By investigating the structure and the properties of the map $f^{\prime}$,
we can know that they are diffeomorphic to $S^3 \times D^{m-3}$.
This means that we do not need the $4l^{\prime}$ singular points. More precisely, we can know this by counting the points regarded as poles of spheres in $S_{f^{\prime},1}$ and $S_{f^{\prime},3}$.
The number of the singular points of a Morse function of $M$ can be reduced to $2l^{\prime}+6$. 
FIGURE \ref{fig:1} shows some information on our map $f^{\prime}$.

	\ \\	
STEP 4-3 Investigating the manifold $M$ of the domain. We apply methods respecting main ingredients of \cite{kitazawa9}. \\
\ \\
We may regard $f$ as a map whose image is same as that of $f^{\prime}$.
We respect main ingredients of \cite{kitazawa9} and the following properties and arguments are enjoyed. They are on smooth compact submanifolds with no boundaries, homology groups of $M$ and their elements (represented by these submanifolds).
First remember that $M$ is simply-connected and that the 2nd integral homology group of $M$ is free.

\begin{itemize}
	\item In the case $n \geq 4$, we have the following two.
	\begin{itemize}
		\item The 2nd integral homology group $H_2(M;\mathbb{Z})$ of $M$ is free and of rank $l^{\prime}$. In the case $i>1$, we can choose a basis of $H_2(M;\mathbb{Z}/i\mathbb{Z})$, whose rank is same as that of $H_2(M;\mathbb{Z})$. The basis is realized as the image of $H_2(F_{P_2};\mathbb{Z}/i\mathbb{Z})$ by the homomorphism induced by the inclusion of $F_{P_2}$ into $M$. In the case $i=0$, we can argue in the same way.
		\item The ($n-1$)-th integral homology group $H_{n-1}(M;\mathbb{Z})$ of $M$ is free and of rank $l_N=2$ in the case $n \geq 5$. We explain about its basis. We can take a natural basis of $H_{n-1}(\bar{N};\mathbb{Z})$. The group $H_{n-1}(\bar{N};\mathbb{Z})$ is free and of rank $2$ and we can take its basis consisting of elements represented by smoothly, disjointly and properly embedded two ($n-1$)-dimensional standard spheres (in the interior ${\rm Int}\ \bar{N}$). Our desired basis of the ($n-1$)-th integral homology group $H_{n-1}(M;\mathbb{Z})$ consists of exactly two elements represented by (the images of) some sections of the spheres in ${\rm Int}\ \bar{N}$. The ($n-1$)-th integral homology group $H_{n-1}(M;\mathbb{Z})=H_{3}(M;\mathbb{Z})$ of $M$ is free and of rank $4$ in the case $n=4$.
	\end{itemize}

By STEP 4-2 and arguments on the number of singular points of the Morse function together with the Poincar\'e duality for $M$ and universal coefficient theorem for example, this describes information on the integral homology group of $M$ completely.

We remark on the Poicar\'e dual to each element of a (suitable) basis of $H^2(M;\mathbb{Z}/i\mathbb{Z})$. We can consider a (suitable) basis of $H_2(M;\mathbb{Z}/i\mathbb{Z})$ and by considering the so-called {\it duals}, we also have a (suitable) basis of $H^2(M;\mathbb{Z}/i\mathbb{Z})$. We can consider a suitable situation and we may regard that the Poincar\'e dual to each element is an element of the homology group $H_{m-2}(M;\mathbb{Z}/i\mathbb{Z})$ represented by the total space of a subbundle of some trivial smooth bundle over some smoothly, disjointly and properly embedded two ($n-1$)-dimensional standard sphere (in the interior ${\rm Int}\ \bar{N}$). More precisely, the trivial bundle is defined by the restriction of $f$ and the fiber of each subbundle is regarded as a smoothly embedded circle in the preimage of a point with no singular points. In short, the preimage is diffeomorphic to "$F_f$ in Definition \ref{def:2}", being also a closed, connected and orientable surface.

In the case $n \geq 5$, we remark on the Poincar\'e dual to each element of a (suitable) basis of $H^{n-1}(M;\mathbb{Z})$. We can consider a (suitable) basis of $H_{n-1}(M;\mathbb{Z})$ and by considering the duals, we also have a (suitable) basis of $H^{n-1}(M;\mathbb{Z})$. We can consider a suitable situation and we may regard that the Poincar\'e dual to each element is an element represented by $F_{P_1}$ and $F_{P_2}$. In the case $n=4$, we can consider these four spheres or compact submanifolds and we can take a basis each of which is represented by each of these spheres and submanifolds.
\item In the case $n=3$, the 2nd integral homology group $H_2(M;\mathbb{Z})$ is free and has a basis consisting of the following elements. Of course the sets of the elements in these two cases are mutually disjoint.
\begin{itemize}
	\item Elements as in the case $n \geq 4$.
	\item Exactly two elements represented by (the images of) some sections of ($n-1$)-dimensional spheres smoothly embedded in ${\rm Int}\ \bar{N}$.
	 \end{itemize}
This describes information on the integral homology group of $M$ completely by STEP 4-2 and arguments on the number of singular points of the Morse function with Poincar\'e duality for $M$ and universal coefficient theorem. We can also argue in a similar way for the remaining.
\end{itemize} 

We must explain about Stiefel-Whitney classes, Pontrjagin classes and cup products.

Any $3$-dimensional compact and orientable manifold is well-known to be spin. The construction says that $M$ is spin. For
the ($m-2$)-dimensional compact submanifolds with no boundaries before are diffeomorphic to $S^{n-1} \times S^1$ and known to have trivial tangent bundles. These submanifolds are smoothly embedded in $M$ with trivial normal bundles by the structures of the map and the manifold.   

In the case $n \geq 5$, the ($n-1$)-th integral homology group $H_{n-1}(M;\mathbb{Z})$ of $M$ is free and we can take a natural basis of $H_{n-1}(M;\mathbb{Z})$, consisting of elements represented by smoothly, disjointly and properly embedded two ($n-1$)-dimensional standard spheres. In the case $n=3$, we can argue similarly. If we restrict the tangent bundle to these spheres, then they are trivial. In the case $n=4$, the $3$rd integral homology group $H_{n-1}(M;\mathbb{Z})=H_{3}(M;\mathbb{Z})$ of $M$ is free and we can take a natural basis of $H_{n-1}(M;\mathbb{Z})=H_{3}(M;\mathbb{Z})$, consisting of elements represented by smoothly, disjointly and properly embedded two $3$-dimensional standard spheres and two $3$-dimensional closed, connected and orientable manifolds represented by $F_{P_1}$ and $F_{P_2}$. $F_{P_1}$ and $F_{P_2}$ have trivial tangent bundles by the well-known fact on $3$-dimensional manifolds. The restriction of the tangent bundle of $M$ here is also trivial by the structure of the map and the manifold and we can apply this for any $n \geq 3$.

This explains about the restrictions of the tangent bundle of $M$ to the compact submanifolds by which elements of the homology groups before are represented all to see that the $a$-th Stiefel-Whitney classes and the $a$-th Pontrjagin classes are always the zero elements for any positive integer $a$. The restrictions of the tangent bundle of $M$ are trivial. Here we apply fundamental theory from linear bundles here with no related exposition. See \cite{milnorstasheff} again.

To these compact submanifolds by which the elements of the homology groups before are represented, choose two submanifolds and consider the so-called {\it generic} {\it intersection}. By the structures of the map and the manifold, they can be smoothly isotoped so that the resulting submanifolds are mutually disjoint except the presented case for the Poincar\'e duals. By the Poincar\'e duality and intersection theory for $M$, we have a desired fact on the cup products. 

By the construction, we can know the last fact, which is for the case $l>2$. This is, in short, two resulting maps are not {\it $C^{\infty}$ equivalent}. For such a notion, see \cite{golubitskyguillemin} again.
\\
\ \\
This completes the proof.
\end{proof}
\begin{proof}[A proof of Main Theorem \ref{mthm:2}]
	This immediately follows from classifications of $5$ and $6$-dimensional closed and simply-connected manifolds. See \cite{barden} again for the $5$-dimensional case and see \cite{jupp, wall1, wall2, zhubr1, zhubr2} for example for the $6$-dimensional case.
\end{proof}

\section{New problems closely related to our present study.}
 \cite{barden} shows the existence of a family of countably many $5$-dimensional closed and simply-connected manifolds each of whose 2nd integral homology group is not free, each of which is spin (not spin), and different manifolds in which are mutually non-homeomorphic. Related to Theorem \ref{thm:1} (\ref{thm:1.2}) and (\ref{thm:1.4}), according to \cite{nishioka}, such manifolds admit special generic maps into ${\mathbb{R}}^n$ if and only if $n=5$ and it is spin. Compare this to Main Theorem \ref{mthm:2}.
 
 For classifications of $6$-dimensional ones, see \cite{jupp, wall1, wall2, zhubr1, zhubr2} again for example. We can remark on the $6$-dimensional cases somewhat similarly. These $6$-dimensional cases are more complicated and we do not explain about them precisely.
 
Respecting these observations for example, we present several problems.
	\begin{Prob}
	\label{prob:1}
	Let $m>1$ be an integer.
	Is there any closed and connected (simply-connected) manifold $M$ enjoying the following properties.
	\begin{enumerate}
		\item $M$ admits no special generic maps into ${\mathbb{R}}^n$ for any integer $n$ satisfying $1 \leq n \leq m$. If we assume that $M$ is orientable and can be immersed into the one-dimensional higher Euclidean space, then we replace the condition on $n$ into $1 \leq n<m$. This is due to Eliashberg's theory \cite{eliashberg1}. \cite{eliashberg2} is also a related study.
		\item $M$ admits an (F)SGL map into some Euclidean space ${\mathbb{R}}^n$ satisfying $1 \leq n \leq m$. If we assume that $M$ is orientable and can be smoothly immersed into the one dimensional higher Euclidean space, then we replace the condition on $n$ here into $1 \leq n<m$.
	\end{enumerate}
\end{Prob}
	We have various affirmative answers.
	
	In the $2$-dimensional case, the torus $S^1 \times S^1$ and the Klein Bottle give examples. They do not admit special generic maps into $\mathbb{R}$. They admit simply generalized special generic maps into $\mathbb{R}$ and it is a Morse-Bott function. More precisely, "$F_f$" is the disjoint union of two copies of $S^1$ in Definition \ref{def:2}. We can check this as an exercise.
	
	In the case of $3$-dimensional closed, connected and orientable manifolds, all such manifolds admit FSGL maps into $\mathbb{R}$ by our proof of Theorem \ref{thm:2}. Furthermore, $S^3$, $S^1 \times S^2$ and so-called {\it Lens spaces} admit simply generalized special generic maps into $\mathbb{R}$ whereas the others do not. This is due to the theory of Heegaard splittings. 
	Theorem \ref{thm:1} (\ref{thm:1.1}) implies that most of $3$-dimensional closed, connected and orientable manifolds are desired examples.
	
According to \cite{kitazawa4, kitazawa5} in the case where the manifold can not be smoothly immersed into the one dimensional higher Euclidean space, we have an example in the case where the manifold $M$ is simply-connected.
		The $3$-dimensional complex projective space ${\mathbb{C}P}^3$ gives an example. This is realized as the total space of a linear bundle over $S^4$ whose fiber is the unit sphere $S^2$. The composition of the projection onto $S^4$ with a canonical projection into ${\mathbb{R}}^4$ is an FSGL map which is also a simply generalized special generic map. The preimage "$F_f$ in Definition \ref{def:2}" is diffeomorphic to the disjoint union of two $2$-dimensional spheres.
\begin{Prob}
		\label{prob:2}
We do not know any explicit case where the manifold is simply-connected and can be smoothly immersed into the one dimensional higher Euclidean space. Can we find?
\end{Prob}
For example, the following is also unknown.
\begin{Prob}
	\label{prob:3｝}
	Can we find some simply-connected manifold $M$ for Problem \ref{prob:1} such that the manifold $F_f$ is connected.
\end{Prob}   
	\section{Acknowledgement.}
	The author would like to thank Takahiro Yamamoto for related discussions on our previous study \cite{kitazawa9}, especially, on the terminology "({\it simply}) {\it generalized special generic maps}" and meanings of the studies in the singularity theory of differentiable maps and applications to algebraic topology and differential topology of manifolds. These discussions have motivated the author to study further and contributed to \cite{kitazawa10}, which has introduced {\it SGL} maps first in a generalized style and studies fundamental properties and non-trivial explicit construction of such maps. This leads us to our present study on manifolds admitting such maps for some very explicit and important cases. We can see that our new study applies \cite{kitazawa9, kitazawa10} and some new generalized methods.

	\end{document}